\numberwithin{equation}{section}
\newtheorem{thm}{Theorem}[section]
\newtheorem{rem}[thm]{Remark}
\newtheorem{lem}[thm]{Lemma}
\renewcommand{\dim}{\begin{proof}}
\newcommand{\finedim}{\end{proof}}
\newcommand{\R}{\mathbb{R}}
\newcommand{\Tt}{\mathbb{T}}
\newcommand{\al}{\alpha}
\newcommand{\ep}{\epsilon}
\newcommand{\Hh}{\overline{H}}
\newcommand{\di}{\text{div}}
\newcommand{\intor}{\int_{\Tt^N}}
\renewcommand{\div}{\operatorname{div}}
\newcommand{\Rr}{{\mathbb{R}}}
\newcommand{\beq}{\begin{equation}}
\newcommand{\eeq}{\end{equation}}
\newcommand{\beqs}{\begin{equation*}}
\newcommand{\eeqs}{\end{equation*}}
\newcommand{\beqa}{\begin{eqnarray}}
\newcommand{\eeqa}{\end{eqnarray}}
\newcommand{\beqas}{\begin{eqnarray*}}
\newcommand{\eeqas}{\end{eqnarray*}}
\title{Obstacle Mean-Field Game Problem}
\author{Diogo
  A. Gomes\footnote{
King Abdullah University of Science and Technology (KAUST), CSMSE Division,Thuwal 23955-6900,  Saudi Arabia,
 and KAUST SRI, Uncertainty Quantification Center in Computational Science and Engineering. e-mail: diogo.gomes@kaust.edu.sa}, 
  Stefania Patrizi\footnote{Weierstrass Institute for Applied Analysis and Stochastics. e-mail: stefaniapatrizi@yahoo.it}
}
\date{\today} 
\begin{document}

\maketitle

\begin{abstract}
In this paper, we introduce and study a first-order mean-field game obstacle problem. 
We examine the case of local dependence on the measure under assumptions that include
both the logarithmic case and power-like nonlinearities. 
Since the obstacle operator is not differentiable, 
the equations for first-order mean field game problems have to be discussed carefully. 
Hence, we begin by considering a penalized problem.
We prove this problem admits a unique solution
satisfying uniform bounds. These bounds serve to pass to the limit in the penalized problem 
and to characterize the limiting equations. 
Finally, we prove uniqueness of solutions. 
\end{abstract}

\thanks{
D. Gomes was partially supported by KAUST baseline and start-up funds and 
KAUST SRI, Uncertainty Quantification Center in Computational Science and Engineering. }

\section{Introduction}

The mean-field game framework \cite{C1, C2, ll1, ll2, ll3, ll4}
is a class of methods that model the behavior of 
large populations of rational agents under a non-cooperative dynamic behavior. 
This research area has applications ranging from economics to engineering, 
as discussed in the recent surveys \cite{llg2, cardaliaguet, GS}, the additional
references therein, and the lectures by P. L. Lions in Coll\'ege de France \cite{LCDF}.   

In this paper, we investigate first-order mean-field game obstacle problems in the stationary periodic  setting. 
To our knowledge, in the context of mean-field games, 
these problems were not studied previously. 
Before describing the problem, we start by recalling the original stationary mean-field game problem from \cite{ll1},
as well as the obstacle problem for Hamilton-Jacobi (H-J) equations \cite{L}.  
 
Let $\Tt^N$ be the $N$-dimensional torus identified when convenient with $[0, 1]^N$.  Consider a continuous function,
 $H:\Rr^N\times \Tt^N\to \Rr$, the Hamiltonian, and a continuous increasing function, $g: \Rr^+_0\to \Rr$. 
In \cite{ll1}, the authors consider the stationary mean-field game system
\begin{equation}
\label{smfg}
\begin{cases}
H(Du,x)=g(\theta)+\Hh\\
\div(D_pH \theta)=0,    
\end{cases}
\end{equation}
where the unknowns are a function $u:\Tt^N\to \Rr$, a probability measure identified with its density $\theta:\Tt^N\to \Rr$ and a constant $\Hh$.
The second equation is the adjoint of the linearization of the first equation in the variable $u$. This system \eqref{smfg} has the canonical structure
of a mean-field game problem: a nonlinear elliptic or parabolic nonlinear partial differential equation (PDE) coupled with a PDE given by the adjoint of its linearization. 

The existence of weak solutions for \eqref{smfg} was considered in \cite{ll1}. 
In 
\cite{E2} mean-field games are not mentioned explicitly, however, 
the results there yield the existence of smooth solutions of \eqref{smfg}
for $g(\theta)=\ln \theta$. 
The second order case, was also studied in 
 \cite{GM} (see also \cite{GIMY}), \cite{GPM1}, and \cite{GPatVrt}). Stationary mean-field games with congestion were 
considered in \cite{GMit}.
The
time-dependent problem was addressed for parabolic mean-field games in \cite{ll2}, \cite{CLLP}, \cite{porretta}, \cite{GPM2}, \cite{GPM3}, \cite{GPim1}, \cite{GPim2}, 
and in \cite{Cd1}, and \cite{Cd2} for first-order
mean-field games. 

The first-order obstacle
problem arises in optimal stopping  (see \cite{L}, \cite{MR921827}, \cite{MR957658}, \cite{Bardi} and the references therein).
In the periodic setting, a model problem is the following:
let  $\psi:\Tt^N\to \Rr$, and $H:\Tt^N\times \Rr^N \to \Rr$ be continuous functions. 
The obstacle problem is defined by
\begin{equation}
\label{obstacleHJ}
\max\{H(Du,x), u-\psi(x)\}=0,
\end{equation}
where  $u:\Tt^N\to \Rr$ is a bounded continuous function.

The linearization of the obstacle
operator 
is not well-defined since the left-hand side of \eqref{obstacleHJ} may fail to be differentiable. 
Thus,
  it is not clear what should be the corresponding mean-field model. 
One of the 
contributions of this paper is the characterization of the appropriate analog to \eqref{smfg} for 
 obstacle problems. This is achieved by applying the penalization method. 
This is a standard technique employed in many related problems, e.g. \cite{L}.
In the classical obstacle problem, 
to do so, one considers a family 
of smooth functions, $\beta_\epsilon:\Rr\to \Rr_0^+$, which vanish identically in $\Rr_0^-$
and satisfy $\beta_\epsilon(z)=\frac{z-\epsilon}{\epsilon}$ for $z>\epsilon$. 
Then, obstacle problem is approximated by the equation
\begin{equation}
\label{approxobs}
H(Du_\ep,x)+\beta_\epsilon(u_\ep-\psi)=\epsilon \Delta u_\ep.
\end{equation}
This equation admits viscosity solutions that satisfy uniform Lipschitz bounds. 
By sending $\epsilon\to 0$, one obtains a solution to \eqref{obstacleHJ}.

Thanks to \cite{L}, for every $\epsilon > 0$ 
there exists a smooth solution $u^\varepsilon$ to \eqref{approxobs}.
It is also well known that, up to subsequences, $u^{\epsilon}$
converges uniformly to a viscosity solution $u$ of \eqref{obstacleHJ}.
The rate of convergence of this approximation was investigated using the nonlinear adjoint method in \cite{CGT2}.

We then are 
 led naturally to the approximate mean-field obstacle problem
\begin{equation}
\label{amfgobs}
\begin{cases}
H(Du_\epsilon,x)+\beta_\epsilon(u_\epsilon-\psi)=g(\theta_\epsilon)\\
-\div(D_pH(Du_\epsilon,x)\theta_\epsilon)+\beta_\epsilon'(u_\epsilon-\psi) \theta_\epsilon=\gamma(x).
\end{cases}
\end{equation}
The additional term $\gamma$ in the right-hand side of \eqref{amfgobs} arises
for the following reason: 
the mean-field obstacle problem models a population of agents trying to move optimally up to a certain stopping time at which they  switch to the obstacle 
(the term $\beta_\epsilon' \theta_\epsilon$ is the flow of agents switching to the obstacle).  
Without a source term introducing new agents in the system, we could fall into the pathological situation $\theta_\epsilon\equiv 0$.  
As it will be clear from the discussion, the approximate problem \eqref{amfgobs} admits smooth solutions even without additional elliptic regularization terms. 
This remarkable property is also true for certain first-order mean-field games, see, for instance, \cite{E1}. The function $u_\epsilon$ in 
\eqref{amfgobs} is the
value function for an optimal stopping problem.
This problem may not admit a continuous solution,  \cite{MR921827}, \cite{MR957658}.
Owing to 
the structure of \eqref{amfgobs}, we were able to 
prove regularity estimates that hold uniformly in $\epsilon$. However, in other related important situations, this may not be the case. 
It would be extremely interesting to consider a discontinuous viscosity solution 
approach for such problems. 

As we will show in Section \ref{convsec}, by passing to the limit in \eqref{amfgobs}, we obtain the mean-field obstacle problem 
\begin{equation}\label{obstacleepmfglimiteqrem}
\begin{cases}
H(Du,x) = g(\theta)&\text{in }\quad \Tt^N,\\
-\di (D_p H(Du,x)\theta)\leq \gamma(x)\quad&\text{in } \quad \Tt^N,\\
-\di (D_p H(Du,x)\theta)= \gamma(x)&\text{in } \quad \{u<\psi\}.\\
u\leq \psi.
\end{cases}
\end{equation}

This paper is structured as follows:
after discussing the main hypothesis in Section \ref{opmfg}, we prove, in Section \ref{A priori estimates}, various estimates for \eqref{amfgobs} that are 
 uniform in $\epsilon$. Namely, we obtain:
\begin{thm}\label{lipestimthm}
Under the assumptions of Section \ref{opmfg}, let $(u_\epsilon, \theta_\epsilon)$ be the solution to \eqref{amfgobs}.

Then, 
there exists a constant $C$ independent of $\ep$ such that 
\beq\label{w22est}\|u_\epsilon\|_{W^{2, 2}(\Tt^N)}\le C, \eeq
\beq\label{thetabounded} \|\theta_\epsilon\|_\infty\le C,\eeq
\beq\label{thetagradest}\|\theta_\epsilon\|_{W^{1, 2}(\Tt^N)}\le C,\eeq
and \beq\label{Dubounded} \|Du_\epsilon\|_\infty\le C.\eeq
\end{thm}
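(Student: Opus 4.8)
The plan is to establish the four bounds in a specific order, because each feeds into the next. The natural starting point is the first equation of \eqref{amfgobs}. Since $\beta_\epsilon\ge 0$ and $g$ is increasing, testing this equation against suitable functions (or simply evaluating at points where $u_\epsilon-\psi$ is maximal/minimal) should give a lower bound on $g(\theta_\epsilon)$, hence — using the coercivity and growth hypotheses on $H$ and the monotonicity of $g$ that will have been imposed in Section \ref{opmfg} — a first control linking $\|Du_\epsilon\|$ in some integral norm to $\|g(\theta_\epsilon)\|$ in a dual norm. The key structural fact I expect to use is the variational/adjoint pairing: multiply the first equation by $\theta_\epsilon$, multiply the second equation by $u_\epsilon-\psi$ (or by $u_\epsilon$), integrate over $\Tt^N$, and subtract. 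The divergence term integrates by parts to produce $\int D_pH(Du_\epsilon,x)\cdot Du_\epsilon\,\theta_\epsilon$, the penalization terms combine into $\int(\beta_\epsilon(u_\epsilon-\psi)-\beta_\epsilon'(u_\epsilon-\psi)(u_\epsilon-\psi))\theta_\epsilon$, which by convexity of a primitive of $\beta_\epsilon$ has a favorable sign, and one is left with $\int g(\theta_\epsilon)\theta_\epsilon$ against $\int \gamma(u_\epsilon-\psi)$. Using the convexity inequality $D_pH\cdot p - H \ge -C$ and the assumed growth of $g(\theta)\theta$ from below (the logarithmic and power cases are exactly those for which $g(\theta)\theta$ controls a superlinear quantity), this yields an a priori bound on $\int g(\theta_\epsilon)\theta_\epsilon$ and, simultaneously, on $\int H(Du_\epsilon,x)\theta_\epsilon$; coercivity of $H$ then gives an $L^1$- or $L^2$-type bound on $Du_\epsilon$ weighted by $\theta_\epsilon$, and boundedness of $\theta_\epsilon$ away from zero in an averaged sense.

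Next I would upgrade these integral bounds to the stated Sobolev bounds. For \eqref{thetabounded} and \eqref{thetagradest}, the idea is to use the second equation of \eqref{amfgobs} as an elliptic equation for $\theta_\epsilon$: since $\beta_\epsilon'\ge 0$, the operator $\theta\mapsto -\div(D_pH(Du_\epsilon,x)\theta)+\beta_\epsilon'(u_\epsilon-\psi)\theta$ is of Fokker–Planck type with a zeroth-order term of good sign, so one can test against powers $\theta_\epsilon^{q-1}$ and run a Moser-type iteration, provided one already controls $\div(D_pH(Du_\epsilon,x))$, i.e. $D^2_{pp}H\,D^2u_\epsilon + D^2_{px}H$, in a suitable space — this is where \eqref{w22est} enters. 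So the logical order is: (i) the adjoint-pairing estimate above; (ii) a $W^{2,2}$ estimate for $u_\epsilon$ obtained by differentiating the first equation, testing against $\Delta u_\epsilon$ (or against $D(\beta_\epsilon(u_\epsilon-\psi))$ — note $\beta_\epsilon$ smooth and monotone means $\int \beta_\epsilon'(u_\epsilon-\psi)|D(u_\epsilon-\psi)|^2\ge 0$, which absorbs the awkward penalization term with the right sign), combined with the $L^2$ control of $g(\theta_\epsilon)$ from step (i) and convexity of $H$ to get $\int|D^2u_\epsilon|^2\le C$; (iii) feed $u_\epsilon\in W^{2,2}$ into the Fokker–Planck equation to get $\theta_\epsilon\in L^\infty$ and then $\theta_\epsilon\in W^{1,2}$ by testing against $\theta_\epsilon$ itself; (iv) finally $Du_\epsilon\in L^\infty$ follows because $g(\theta_\epsilon)$ is now bounded (since $\theta_\epsilon$ is bounded and, from the lower bound in step (i) plus the Fokker–Planck maximum principle, bounded below), $\beta_\epsilon(u_\epsilon-\psi)\ge 0$, so $H(Du_\epsilon,x)=g(\theta_\epsilon)-\beta_\epsilon(u_\epsilon-\psi)$ is bounded above, and coercivity of $H$ gives \eqref{Dubounded}. (One also needs $\beta_\epsilon(u_\epsilon-\psi)$ bounded above, which comes from the same first equation once $g(\theta_\epsilon)$ is bounded above and $H$ is bounded below.)

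The main obstacle, I expect, is step (ii)–(iii): closing the $W^{2,2}$ estimate for $u_\epsilon$ and the $L^\infty$ estimate for $\theta_\epsilon$ without circularity, since the second-derivative bound on $u$ needs an $L^2$ bound on $g(\theta_\epsilon)$ (hence on $\theta_\epsilon$ in the relevant range), while the bound on $\theta_\epsilon$ seems to need $u_\epsilon\in W^{2,2}$. The resolution should be that the pairing estimate in step (i) already delivers enough integrability of $g(\theta_\epsilon)$ — for the logarithmic and power nonlinearities $g(\theta_\epsilon)\theta_\epsilon\in L^1$ translates into $\theta_\epsilon$ lying in a space slightly better than $L^1$, and, crucially, the Fokker–Planck equation with the good-sign zeroth-order term $\beta_\epsilon'\theta_\epsilon$ and with a source $\gamma\in L^\infty$ allows a De Giorgi/Moser argument whose constant depends only on $\|D^2u_\epsilon\|_{L^2}$ — so one sets up a continuity/bootstrap argument, or more cleanly estimates the two quantities simultaneously by adding the two tested identities so that the problematic cross-terms cancel. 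Throughout, the essential point making everything work — and the reason the approximate problem needs no extra viscosity term — is the monotonicity $\beta_\epsilon'\ge 0$ together with convexity of $H$ in $p$, which is precisely what the hypotheses of Section \ref{opmfg} are designed to provide; once these sign conditions are exploited the remaining computations are the routine testing-and-integration-by-parts manipulations sketched above.
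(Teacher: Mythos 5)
Your overall skeleton is right and matches the paper in two places: the adjoint pairing (multiply the first equation by $\theta_\epsilon$, the second by $u_\epsilon$, subtract) is exactly how the paper starts, and the final step \eqref{Dubounded} via $H(Du_\epsilon,x)\le g(\theta_\epsilon)\le C$ is also the paper's. But the two middle estimates, which are the heart of the theorem, are where your plan has genuine gaps. First, for \eqref{w22est} the paper does not test the differentiated equation against $\Delta u_\epsilon$: it applies $\Delta$ to the first equation and multiplies by $\theta_\epsilon$, precisely because the adjoint equation makes the dangerous terms vanish, $\intor (D_pH\cdot D(\Delta u_\epsilon)+\beta_\epsilon'\Delta u_\epsilon)\theta_\epsilon\,dx=\intor\Delta u_\epsilon\,dx=0$; testing against $\Delta u_\epsilon$ instead produces $-\tfrac12\intor \div(D_pH)(\Delta u_\epsilon)^2dx$, which you cannot absorb. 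This weighted identity also delivers, for free, $\intor g'(\theta_\epsilon)|D\theta_\epsilon|^2dx\le C$, i.e.\ $\theta_\epsilon^{(\alpha+1)/2}\in W^{1,2}$, which is the integrability of $\theta_\epsilon$ that seeds everything that follows; your plan never produces this, and it is also how the paper resolves the circularity you worry about (the $W^{2,2}$ bound needs only $\intor|Du_\epsilon|^2\theta_\epsilon\,dx\le C$ from the pairing, not an $L^2$ bound on $g(\theta_\epsilon)$).

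Second, the $L^\infty$ bound \eqref{thetabounded} cannot be obtained by a standard Moser iteration on the Fokker--Planck equation ``with constants depending on $\|D^2u_\epsilon\|_{L^2}$'': the drift divergence $\div(D_pH)=H_{p_ip_j}u_{x_ix_j}+H_{p_ix_i}$ is then only in $L^2$, which is below the integrability threshold needed to close such an iteration when $N>4$. The paper instead multiplies the second equation by the Evans-type test function $\div(\theta_\epsilon^p D_pH)$, uses the first equation to substitute $H_{p_j}u_{x_jx_l}=g'(\theta_\epsilon)\theta_{\epsilon,x_l}-H_{x_l}-\beta_\epsilon' u_{x_l}$, and arrives at $\intor\theta_\epsilon^{p-1}|D\theta_\epsilon|^2dx\le C\intor\theta_\epsilon^{p+1}(1+|Du_\epsilon|^4)dx$; the iteration then closes only because $|Du_\epsilon|^4\le C(1+\theta_\epsilon^{(\alpha+1)\beta(\beta-1)})$ with $\beta=\sqrt{2^*/2}$, which is exactly where the restriction $\alpha<\alpha_0$ of \eqref{lipsalphaassmp} enters --- a hypothesis your argument never uses, which is a sign something is missing. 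Finally, you omit the uniform bound $\max\beta_\epsilon'(u_\epsilon)\le C$ (obtained in the paper by evaluating the second equation at a maximum point of $u_\epsilon$); a priori $\beta_\epsilon'\le 1/\epsilon$ only, and $\beta_\epsilon'$ appears with unfavorable sign in several of the integral estimates, so ``$\beta_\epsilon'\ge0$ is a good sign'' is not enough.
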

Applying these estimates, we consider the limit $\epsilon\to 0$
in Section \ref{convsec}. There we
get the following result:
\begin{thm}
\label{cthm}
Under the assumptions of Section \ref{opmfg}, 
let $(u_\epsilon, \theta_\epsilon)$ be the solution to \eqref{amfgobs}. Then there exists
$u\in W^{1, \infty}(\Tt^d)\cap W^{2, 2}(\Tt^d)$, and $\theta\in L^\infty(\Tt^d) \cap W^{1, 2}(\Tt^d)$ such that, through some subsequence, 
\beqs u_\ep\rightarrow u\quad\text{in }L^\infty(\Tt^N),\eeqs
\beqs Du_\ep\rightarrow Du,\quad  \theta_\ep\rightarrow\theta \quad\text{in }L^2(\Tt^N),\eeqs 
\beqs D^2u_\ep\rightharpoonup D^2u \quad\text{in }L^2(\Tt^N),\eeqs
as $\epsilon\rightarrow 0$. 
Furthermore, $(u, \theta)$ solves \eqref{obstacleepmfglimiteqrem}.
\end{thm}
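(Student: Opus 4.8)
The plan is to combine the uniform estimates from Theorem \ref{lipestimthm} with standard compactness and lower-semicontinuity arguments, and then to identify each of the four conditions in \eqref{obstacleepmfglimiteqrem} by a careful passage to the limit, the most delicate point being the complementarity relation on the set $\{u<\psi\}$. First I would extract the subsequences: from \eqref{w22est} and \eqref{Dubounded}, $(u_\epsilon)$ is bounded in $W^{2,2}(\Tt^N)\cap W^{1,\infty}(\Tt^N)$, so by Rellich--Kondrachov and Morrey we may pass to a subsequence with $u_\epsilon\to u$ uniformly, $Du_\epsilon\to Du$ in $L^2$ (indeed in any $L^p$, $p<\infty$, and weak-$*$ in $L^\infty$) and $D^2u_\epsilon\rightharpoonup D^2u$ in $L^2$; similarly \eqref{thetabounded}--\eqref{thetagradest} give $\theta_\epsilon\to\theta$ in $L^2$ and weak-$*$ in $L^\infty$, with $\theta\in W^{1,2}(\Tt^N)$. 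Since $u_\epsilon\to u$ uniformly, $\{u<\psi\}$ is open and for every compact $K\subset\{u<\psi\}$ we have $u_\epsilon-\psi\le -\delta_K<0$ on $K$ for $\epsilon$ small, whence $\beta_\epsilon(u_\epsilon-\psi)=0$ and $\beta_\epsilon'(u_\epsilon-\psi)=0$ on $K$.

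Next I would pass to the limit in the first equation of \eqref{amfgobs}. Because $\beta_\epsilon\ge 0$, the Hamilton--Jacobi equation gives $H(Du_\epsilon,x)\le g(\theta_\epsilon)$ pointwise; using $Du_\epsilon\to Du$ a.e. (along a further subsequence), continuity of $H$, and $g(\theta_\epsilon)\to g(\theta)$ in $L^2$, we obtain $H(Du,x)\le g(\theta)$ a.e. For the reverse inequality I would use the uniform bound $\|u_\epsilon\|_{W^{2,2}}\le C$, which forces $\beta_\epsilon(u_\epsilon-\psi)$ to be bounded in $L^2$ (it equals $g(\theta_\epsilon)-H(Du_\epsilon,x)$, both terms $L^2$-bounded); combined with the fact that $\beta_\epsilon(z)\to +\infty$ wherever $z>0$ as $\epsilon\to 0$, this yields $u\le\psi$ a.e., giving the last line of \eqref{obstacleepmfglimiteqrem}. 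On the open set $\{u<\psi\}$ the term $\beta_\epsilon(u_\epsilon-\psi)$ vanishes for small $\epsilon$, so there $H(Du,x)=g(\theta)$; to get the equality $H(Du,x)=g(\theta)$ on all of $\Tt^N$, I would argue that the set $\{u=\psi\}$ either has measure zero or, where it has positive measure, $Du=D\psi$ a.e. there and a careful ordering argument (as in the classical obstacle problem) still forces equality — this is where I would need to exploit the precise structure, possibly invoking that $\beta_\epsilon(u_\epsilon-\psi)\rightharpoonup 0$ in $L^2$ because of the $W^{2,2}$ bound. Alternatively, the cleanest route is: the $L^2$-bound on $\beta_\epsilon(u_\epsilon-\psi)$ together with $\beta_\epsilon'\ge 0$ and a test against $(u_\epsilon-\psi)^+$ shows $\int\beta_\epsilon(u_\epsilon-\psi)(u_\epsilon-\psi)\to 0$, which combined with convexity-type bounds on $\beta_\epsilon$ yields $\beta_\epsilon(u_\epsilon-\psi)\to 0$ in $L^1$, hence $H(Du,x)=g(\theta)$ a.e.

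For the second and third lines of \eqref{obstacleepmfglimiteqrem}, I would pass to the limit in the transport equation. The term $D_pH(Du_\epsilon,x)\theta_\epsilon$ converges strongly in $L^2$ (product of an $L^\infty$-bounded a.e.-convergent sequence with an $L^2$-convergent one, using continuity of $D_pH$ and the uniform gradient bound), so $\di(D_pH(Du_\epsilon,x)\theta_\epsilon)\rightharpoonup\di(D_pH(Du,x)\theta)$ in $H^{-1}$. Since $\beta_\epsilon'(u_\epsilon-\psi)\ge 0$ and $\theta_\epsilon\ge 0$, the term $\beta_\epsilon'(u_\epsilon-\psi)\theta_\epsilon\ge 0$, and testing the second equation of \eqref{amfgobs} against an arbitrary nonnegative $\varphi\in C^\infty(\Tt^N)$ and passing to the limit yields
\[
-\int_{\Tt^N} D_pH(Du,x)\theta\cdot D\varphi\,dx \le \int_{\Tt^N}\gamma\varphi\,dx,
\]
i.e. the inequality $-\di(D_pH(Du,x)\theta)\le\gamma$ in the sense of distributions. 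Finally, for any $\varphi\in C_c^\infty(\{u<\psi\})$, nonnegative or not, the penalization term $\beta_\epsilon'(u_\epsilon-\psi)\theta_\epsilon$ is identically zero on $\supp\varphi$ for $\epsilon$ small (by the compact-containment observation above), so the limiting equation is exactly $-\di(D_pH(Du,x)\theta)=\gamma$ on $\{u<\psi\}$. The main obstacle I anticipate is establishing the equality $H(Du,x)=g(\theta)$ on the full torus (not merely on $\{u<\psi\}$) — equivalently, showing $\beta_\epsilon(u_\epsilon-\psi)\to 0$ strongly — since on $\{u=\psi\}$ one only gets an inequality for free and must use the $W^{2,2}$ estimate together with the monotone structure of $\beta_\epsilon$ to upgrade it; I would also double-check that the a.e. convergence of $Du_\epsilon$ (beyond the stated $L^2$ convergence) is available, extracting it along a further subsequence if necessary.
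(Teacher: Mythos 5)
Your compactness step and your treatment of the Fokker--Planck part (the inequality $-\di(D_pH(Du,x)\theta)\le\gamma$ via nonnegative test functions, and the equality on $\{u<\psi\}$ via compact containment in the non-contact set) match the paper and are fine. The genuine gap is exactly where you flag it: proving $H(Du,x)=g(\theta)$ on the contact set $\{u=\psi\}$, i.e.\ that $\beta_\epsilon(u_\epsilon-\psi)\to 0$ in a strong enough sense. None of the three routes you sketch actually closes this. The claim that $\beta_\epsilon(u_\epsilon-\psi)\rightharpoonup 0$ in $L^2$ ``because of the $W^{2,2}$ bound'' is unjustified: the $W^{2,2}$ bound only gives $\beta_\epsilon(u_\epsilon-\psi)=g(\theta_\epsilon)-H(Du_\epsilon,x)$ bounded in $L^2$, hence a nonnegative weak limit $\chi$, and nothing forces $\chi=0$ on $\{u=\psi\}$. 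Your ``cleanest route'' also does not close: from $\|\beta_\epsilon(u_\epsilon)\|_{L^2}\le C$ and $\|u_\epsilon^+\|_{L^2}^2\le C\epsilon$ one gets $\int\beta_\epsilon(u_\epsilon)u_\epsilon\,dx\le C\sqrt{\epsilon}$, but dividing by $u_\epsilon$ on $\{u_\epsilon>2\epsilon\}$ only yields $\int_{\{u_\epsilon>2\epsilon\}}\beta_\epsilon(u_\epsilon)\,dx\le C\epsilon^{-1/2}$, which is not small; the convexity inequality $\beta_\epsilon(s)\le\beta_\epsilon'(s)s^+$ alone is useless here because a priori $\beta_\epsilon'\le 1/\epsilon$.

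The missing ingredient is the uniform bound \eqref{beta'bounded}, $\max\beta_\epsilon'(u_\epsilon)\le C$ independently of $\epsilon$, which the paper extracts in the proof of Theorem \ref{lipestimthm} by evaluating the second equation of \eqref{amfgobs} at a maximum point of $u_\epsilon$ (using $Du_\epsilon(x_0)=0$, $D^2u_\epsilon(x_0)\le 0$, the lower bound $\theta\ge\theta_0$ and \eqref{g'prop}). Once you have it, the argument is two lines: by convexity and $\beta_\epsilon(0)=0$,
\begin{equation*}
0\le\beta_\epsilon(u_\epsilon)\le\beta_\epsilon'(u_\epsilon)\,u_\epsilon^+\le C\,u_\epsilon^+,
\end{equation*}
and $u_\epsilon^+\to 0$ uniformly (from $\intor u_\epsilon^+\,dx\le C\epsilon$ in \eqref{uplus} together with the uniform Lipschitz bound \eqref{Dubounded}), so $\beta_\epsilon(u_\epsilon)\to 0$ \emph{uniformly}, giving $H(Du,x)=g(\theta)$ on all of $\Tt^N$ and $u\le 0$ at once. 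You should also note that the a.e.\ convergence of $Du_\epsilon$ you want to ``double-check'' is indeed available along a further subsequence from the $L^2$ convergence, so that part is not an issue; the real work is the $\beta_\epsilon'$ bound, which is a statement about the approximate system, not a soft compactness fact.
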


Finally, in Section \ref{uniqsec} we establish the uniqueness of solution of the limit problem. More precisely, 
our main result is: 
\begin{thm}
\label{uthm}
Under the assumptions of Section \ref{opmfg}, 
there exists a unique solution $(u, \theta)$
$u\in W^{1, \infty}(\Tt^d)\cap W^{2, 2}(\Tt^d)$ and $\theta\in L^\infty(\Tt^d) \cap W^{1, 2}(\Tt^d)$
of the mean-field obstacle problem \eqref{obstacleepmfglimiteqrem}.
\end{thm}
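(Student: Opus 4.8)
The plan is to split the statement into its two halves: existence, which is essentially already done, and uniqueness, which is the real content and which I would attack with a Lasry--Lions monotonicity argument adapted to the complementarity structure of \eqref{obstacleepmfglimiteqrem}.

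Existence is immediate from Theorem \ref{cthm}: the pair $(u,\theta)$ obtained there as a limit of $(u_\epsilon,\theta_\epsilon)$ already lies in $W^{1,\infty}(\Tt^N)\cap W^{2,2}(\Tt^N)$ and $L^\infty(\Tt^N)\cap W^{1,2}(\Tt^N)$ and solves \eqref{obstacleepmfglimiteqrem}, so nothing further is needed. For uniqueness I would take two solutions $(u_1,\theta_1)$ and $(u_2,\theta_2)$ in the stated class and study
\[ I=\intor\big(g(\theta_1)-g(\theta_2)\big)(\theta_1-\theta_2)\,dx. \]
Since $g$ is increasing the integrand is nonnegative, so $I\ge 0$, and strict monotonicity of $g$ (as holds in both the logarithmic and power-like cases of Section \ref{opmfg}) will turn $I=0$ into $\theta_1=\theta_2$. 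Using the first equation of \eqref{obstacleepmfglimiteqrem}, $g(\theta_i)=H(Du_i,x)$, I rewrite $I=\intor\big(H(Du_1,x)-H(Du_2,x)\big)(\theta_1-\theta_2)\,dx$ and then exploit convexity of $H$ in $p$, weighted by $\theta_i\ge 0$, in the two inequalities
\[ \big(H(Du_1,x)-H(Du_2,x)\big)\theta_1\le D_pH(Du_1,x)\cdot D(u_1-u_2)\,\theta_1, \]
\[ \big(H(Du_1,x)-H(Du_2,x)\big)\theta_2\ge D_pH(Du_2,x)\cdot D(u_1-u_2)\,\theta_2. \]
Subtracting and integrating gives $I\le J$, where
\[ J=\intor\big(D_pH(Du_1,x)\theta_1-D_pH(Du_2,x)\theta_2\big)\cdot D(u_1-u_2)\,dx. \]
Integrating by parts on the torus (legitimate since $u_i\in W^{2,2}$ and $\theta_i\in W^{1,2}\cap L^\infty$) recasts this as $J=\intor(\mu_1-\mu_2)(u_1-u_2)\,dx$ with $\mu_i:=-\di\big(D_pH(Du_i,x)\theta_i\big)$.

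The decisive step is to show $J\le 0$ using the complementarity in \eqref{obstacleepmfglimiteqrem}, and this is where the obstacle genuinely enters. I split $J=\intor(\mu_1-\gamma)(u_1-u_2)\,dx+\intor(\gamma-\mu_2)(u_1-u_2)\,dx$. By the measure inequalities, $\mu_1-\gamma\le 0$ everywhere and $\mu_1-\gamma=0$ on $\{u_1<\psi\}$, so $\mu_1-\gamma$ is supported in the contact set $\{u_1=\psi\}$; there $u_1-u_2=\psi-u_2\ge 0$ because $u_2\le\psi$, hence the first integral is $\le 0$. Symmetrically, $\gamma-\mu_2\ge 0$ is supported in $\{u_2=\psi\}$, where $u_1-u_2=u_1-\psi\le 0$, so the second integral is $\le 0$. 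Thus $J\le 0$, and together with $0\le I\le J$ this forces $I=0$ and therefore $\theta_1=\theta_2$.

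Finally, once $\theta:=\theta_1=\theta_2$, all the intermediate inequalities are equalities; in particular the weighted convexity inequality is saturated wherever $\theta>0$, which by strict convexity of $H$ gives $Du_1=Du_2$ there, and hence $Du_1=Du_2$ a.e. once the positivity of $\theta$ (guaranteed by the assumptions of Section \ref{opmfg}, and automatic in the logarithmic case where $g(\theta)=\ln\theta$ must be finite) is used. Then $u_1-u_2$ is constant, and the obstacle constraint $u_i\le\psi$ together with the complementarity relation on the contact sets pins down this constant, yielding $u_1=u_2$. I expect the main obstacles to be exactly this last passage from uniqueness of $\theta$ to uniqueness of $u$, and the careful justification of the integration by parts and of the support properties of the measures $\mu_i$ under only $W^{2,2}$ regularity; the sign bookkeeping on the contact sets, by contrast, is the clean place where the obstacle structure does its work.
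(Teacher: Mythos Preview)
Your argument is correct and rests on the same Lasry--Lions monotonicity idea, but the paper organizes it differently. Instead of working on the whole torus and splitting $J=\intor(\mu_1-\gamma)(u_1-u_2)\,dx+\intor(\gamma-\mu_2)(u_1-u_2)\,dx$, the paper localizes to the open set $A:=\{u_1>u_2\}$. Since $u_1\le\psi$, one has $A\subset\{u_2<\psi\}$, so the \emph{equality} $-\di(D_pH(Du_2,x)\theta_2)=\gamma$ holds throughout $A$, while for $u_1$ only the inequality $-\di(D_pH(Du_1,x)\theta_1)\le\gamma$ is used. Multiplying the difference by $u_1-u_2>0$, integrating by parts on $A$ (with $u_1-u_2=0$ on $\partial A$), and combining with the first equation and the uniform convexity of $H$ gives
\[
0\le\int_A\big(g(\theta_1)-g(\theta_2)\big)(\theta_1-\theta_2)\,dx\le -C\int_A|D(u_1-u_2)|^2\,dx,
\]
hence $|A|=0$, i.e.\ $u_1\le u_2$; by symmetry $u_1=u_2$, and then $\theta_1=\theta_2$ follows from the first equation.

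The trade-off is clear. The paper's localization reaches $u_1=u_2$ directly and completely bypasses the step you yourself flagged as delicate, namely going from $\theta_1=\theta_2$ and $Du_1=Du_2$ to $u_1=u_2$. Your global approach, on the other hand, avoids integrating by parts on the possibly irregular set $A$ and displays the complementarity structure more transparently. For your last step, a clean way to close the argument is: if $u_1-u_2\equiv c>0$, then $u_2=u_1-c\le\psi-c<\psi$ on all of $\Tt^N$, so $-\di(D_pH(Du_2,x)\theta_2)=\gamma$ everywhere; integrating over $\Tt^N$ yields $0=\intor\gamma\,dx>0$, a contradiction, and symmetry gives $c=0$.
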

 


\section{Assumptions}
\label{opmfg}

In this section, we describe our main assumptions. First, to 
ease the presentation, we assume the obstacle to vanish, that is, $\psi\equiv 0$. This entails no loss
of generality as we can always redefine the Hamiltonian and the solution so that the
new obstacle
 vanishes. In addition, 
we will take the source term $\gamma(x)=1$. However, our results can be easily adapted to deal with
a non-vanishing smooth source $\gamma$. 

%

On the Hamiltonian $H$  and the function $g$ we assume:
\begin{itemize}
\item [(i)] $H:\R^N\times\R^N\rightarrow\R$ is smooth and positive;
\item [(ii)]For each $p\in\R^N$, $x\rightarrow H(p,x)$ is periodic;
\item [(iii)] There exists a constant  $\lambda>0$ such that 
\beq\label{Hconvexity} H_{p_ip_j}(p,x)\xi_i\xi_j\ge \lambda|\xi |^2\eeq
for all $p,\,x,\,\xi\in\R^N$;
 \item [(iv)] There exists  $C>0$ such that 
 \beq\label{growthH}\begin{split}
 &|D^2_{pp}H|\le C\\ &|D^2_{xp}H|\le C(1+|p|)\\& |D^2_{xx}H|\le C(1+|p|^2)\end{split}\eeq
 and 
  \beq\label{dphp}H(p,x)-D_pH(p,x)p\le C\eeq
for all $p,\,x\in\R^N$.
\item [(vi)] $g:\R^+\rightarrow\R$ is smooth and such that
\begin{enumerate}
\item[(a)]
$g'>0$, 
\item[(b)]
$g^{-1}(0)>0$,  
\item[(c)]  $\theta\rightarrow \theta g(\theta)$ is convex, 
\item[(d)]
 there exist $C,\widetilde{C}>0$ and $\alpha\in[0,\alpha_0)$ with
$\alpha_0$ the solution of
\begin{equation}
\label{lipsalphaassmp}
2\alpha_0=(\alpha_0+1)\beta (\beta-1), \qquad \beta=\sqrt{\frac{2^*}{2}},
\end{equation}
if $N>2$, and $\alpha_0=\infty$ if $N\leq 2$, 
%
such that 
\beq\label{g'prop}C\theta^{\al-1}\leq g'(\theta)\leq \widetilde{C}\theta^{\al-1}+\widetilde{C},\eeq
\item[(f)]
 for any $C_0>0$ there exists $ C_1>0$ such that 
\beq\label{ggrowthprop}
C_0 \theta\leq \frac 1 2 g(\theta)\theta + C_1,
\eeq
for any $\theta\ge 0$.
\end{enumerate}

\end{itemize}



We choose a penalization term $\beta_\ep:\R\rightarrow\R$, smooth, with 
$0\le \beta_\ep'\le \frac{1}{\ep}$, $\beta_\ep''\geq 0$ and such that 
\beq\label{beta1}\beta_\ep(s)=0\quad \text{for } s\le 0,\quad\beta_\ep(s)=\frac{s-\ep}{\ep}\quad \text{for } s>2\ep\eeq
\beq\label{beta2} |\beta_\ep(s)-s\beta_\ep'(s)|\leq C\quad \text{for }s\in\R.\eeq

 \begin{rem}
The typical examples we have in mind for $g$ are
$$g(\theta)=\log(\theta),$$ and 
$$g(\theta)=\theta^\al+\theta_0,$$ for some $\theta_0>0$, $\al\in (0,\alpha_0)$ 
with $\alpha_0$ as in Assumption \ref{lipsalphaassmp}.
\end{rem}
\begin{rem}
The assumptions on the Hamiltonian imply that 
\beq\label{Hquadratic}\frac{\gamma}{2}|p|^2 -C\le H(p,x)\le C |p|^2 +C\eeq and 
\beq\label{DpHsulinear}\begin{split}& |D_pH(p,x)|\le C(1+|p|)\\&
|D_xH(p,x)|\le C(1+|p|^2)\end{split}
\eeq
 for all $p,\,x\in\R^N$.

\end{rem}

\section{A-priori estimates}
\label{A priori estimates}

In this section, we will establish various a-priori estimates for smooth solutions of the approximate mean-field
obstacle problem. 
Because these estimates will be uniform in $\epsilon$,  we can pass to an appropriate limit 
as $\epsilon\to 0$, as explained in the next section. 

In what follows, we denote by $(u,\theta)$ a classical solution of
\eqref{amfgobs}, and we will omit the subscript $\epsilon$ for convenience.  

\begin{lem}\label{boundlem1} 
Under the assumptions of Section \ref{opmfg}, 
there exist  constants $C$, $\theta_0>0$ independent of $\ep$ such that
for any solution   $(u,\theta)$  of \eqref{amfgobs},
\beq\label{lowerboundtheta}\theta\geq \theta_0\quad \text{in }\Tt^N,\eeq
\beq\label{Hsbound}\intor \theta dx\le C,\eeq 
\beq\label{Hsbound2}\left|\intor \theta g(\theta)dx\right|\le C,\eeq
\beq\label{uL1bound} \intor |u| dx\le C,\eeq and
\beq\label{duthetaboundlem2} \intor |Du|^2\theta dx\le C.\eeq 
\end{lem}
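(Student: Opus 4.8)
Our strategy is to combine the two equations of \eqref{amfgobs} (recall $\psi\equiv0$, $\gamma\equiv1$), tested against suitable functions, and then to close the resulting chain of inequalities using the one-sided bound \eqref{dphp}, the penalization estimate \eqref{beta2}, the growth condition \eqref{ggrowthprop} and the quadratic bound \eqref{Hquadratic}. We begin with the lower bound \eqref{lowerboundtheta}, which is immediate: by (i) and \eqref{Hquadratic}, $H$ is continuous, positive and coercive, hence $H\ge m_0$ on $\Rr^N\times\Tt^N$ for some $m_0>0$; since $\beta_\ep\ge0$, the first equation of \eqref{amfgobs} gives $g(\theta)=H(Du,x)+\beta_\ep(u)\ge m_0$, and applying the increasing inverse $g^{-1}$ (note $g^{-1}(m_0)\ge g^{-1}(0)>0$ by (vi)(b)) yields $\theta\ge\theta_0:=g^{-1}(m_0)>0$.

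Next we record a fundamental identity. Multiplying the first equation of \eqref{amfgobs} by $\theta$ and the second by $u$, integrating over $\Tt^N$, integrating the divergence term by parts (there are no boundary terms, by periodicity and smoothness of the solution), and subtracting, we obtain
\[
\intor\big(H(Du,x)-D_pH(Du,x)\cdot Du\big)\theta\,dx+\intor\big(\beta_\ep(u)-u\beta_\ep'(u)\big)\theta\,dx=\intor g(\theta)\theta\,dx-\intor u\,dx.
\]
Since $\theta\ge0$, the first integrand is $\le C$ by \eqref{dphp} and the second is bounded in absolute value by $C$ thanks to \eqref{beta2}, so
\[
\intor g(\theta)\theta\,dx-\intor u\,dx\le C\intor\theta\,dx;
\]
denote this inequality $(\star)$. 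Separately, since $\beta_\ep(s)\ge s-1$ for all $s$ as soon as $\ep\le\tfrac12$, the first equation together with $H>0$ gives the pointwise bound $u\le g(\theta)+1$, hence $\intor u\,dx\le\intor g(\theta)\,dx+1$.

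The heart of the argument is to bootstrap an upper bound on $\intor g(\theta)\theta\,dx$. Using $\theta\ge\theta_0>0$ together with $\theta g(\theta)\ge-C$ (a consequence of \eqref{ggrowthprop}), a splitting of $\Tt^N$ into $\{\theta\le M\}$ and $\{\theta>M\}$, for a fixed $M$ large, shows $\intor g(\theta)\,dx\le C+\tfrac1M\intor g(\theta)\theta\,dx$. Inserting this and the bound $\intor u\le\intor g(\theta)+1$ into $(\star)$ and absorbing the $\tfrac1M\intor g(\theta)\theta$ term on the left gives $\intor g(\theta)\theta\,dx\le C\intor\theta\,dx+C$. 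Applying now \eqref{ggrowthprop} with $C_0$ chosen strictly larger than this constant $C$ yields $\intor g(\theta)\theta\,dx\ge2C_0\intor\theta\,dx-C$, and comparing the two displays forces $\intor\theta\,dx\le C$, which is \eqref{Hsbound}; plugging this back gives $\intor g(\theta)\theta\,dx\le C$, and since $\theta g(\theta)\ge-C$ we obtain \eqref{Hsbound2}. For \eqref{uL1bound}: from $\beta_\ep(u)\ge u^+-1$ and the first equation integrated, $\intor u^+\,dx\le\intor\beta_\ep(u)\,dx+1\le\intor g(\theta)\,dx+1\le C$, while $(\star)$ gives $\intor u\,dx\ge\intor g(\theta)\theta\,dx-C\intor\theta\,dx\ge-C$, whence $\intor u^-\,dx=\intor u^+\,dx-\intor u\,dx\le C$ and therefore $\intor|u|\,dx\le C$. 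Finally, multiplying the first equation by $\theta$ and discarding the nonnegative term $\intor\beta_\ep(u)\theta\,dx$ gives $\intor H(Du,x)\theta\,dx\le\intor g(\theta)\theta\,dx\le C$, and the lower bound in \eqref{Hquadratic} yields \eqref{duthetaboundlem2}.

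The step I expect to be the main obstacle is the bootstrap for $\intor g(\theta)\theta\,dx$: because \eqref{dphp} and the positivity of $H$ are only one-sided, just upper bounds propagate through $(\star)$, so one is forced to exploit the at-least-linear growth of $\theta\mapsto\theta g(\theta)$ built into \eqref{ggrowthprop} against the loss $u\le g(\theta)+1$; it is this balance that dictates the order in which the five estimates must be proved and that makes the splitting argument (rather than a one-shot convexity or Jensen estimate) necessary.
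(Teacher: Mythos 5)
Your proof is correct, and while it follows the same overall skeleton as the paper's (the cross-multiplied identity, the one-sided bounds \eqref{dphp} and \eqref{beta2}, and closing via \eqref{ggrowthprop} and \eqref{Hquadratic}), it closes the central estimate by a genuinely different mechanism. The paper controls $\intor u^+dx$ by exploiting the blow-up of the penalization: since $\beta_\ep'(s)=\tfrac1\ep$ for $s>2\ep$, it derives $\tfrac1\ep\intor u^+dx\le C\intor u^+dx+C$ and hence the sharp bound $\intor u^+dx\le C\ep$, after which $\intor g(\theta)\theta\,dx\le C$ and $\intor\theta\,dx\le C$ follow at once from \eqref{ggrowthprop}; the lower bound in \eqref{Hsbound2} is then obtained via Jensen's inequality using the convexity of $\theta\mapsto\theta g(\theta)$. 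You instead use only the $\ep$-uniform inequality $\beta_\ep(s)\ge s-1$ (valid for $\ep\le\tfrac12$) to get $u\le g(\theta)+1$, bound $\intor g(\theta)dx\le C+\tfrac1M\intor g(\theta)\theta\,dx$ by the splitting over $\{\theta\le M\}$ and $\{\theta>M\}$ (clean here because $g(\theta)=H+\beta_\ep(u)\ge0$ along solutions), and absorb; you also replace Jensen by the pointwise bound $\theta g(\theta)\ge-C$, which again follows from \eqref{ggrowthprop} and avoids invoking assumption (c). Both routes are valid. What the paper's argument buys that yours does not is the quantitative estimate $\intor u_\ep^+dx\le C\ep$, which is not needed for the lemma as stated but is the information that later (combined with the Lipschitz bounds) forces the limit $u$ to be non-positive in Section \ref{convsec}; your $O(1)$ bound on $\intor u^+dx$ suffices for \eqref{uL1bound} but loses this. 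What your argument buys is uniformity of the mechanism in $\ep$: it never uses the $1/\ep$ slope of $\beta_\ep'$, so it would survive for weaker penalizations. All the individual steps check out (in particular $\beta_\ep(s)\ge s-1$ and $\beta_\ep(s)\ge s^+-1$ do hold under \eqref{beta1} for $\ep\le\tfrac12$, and your coercivity-based lower bound $H\ge m_0>0$ is legitimate though unnecessary, since $H\ge0$ already gives $\theta\ge g^{-1}(0)>0$).
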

\dim
The lower bound on $\theta$ is a consequence of the fact that $g^{-1}$ is increasing with $g^{-1}(0)>0$, and $H$ and $\beta_\ep$ are non-negative:
$$\theta=g^{-1}(H(Du,x)+\beta_\ep(u))\ge g^{-1}(0)=:\theta_0>0.$$
Next, multiplying  the first equation of \eqref{amfgobs} by $\theta$, the second equation by $u$, integrating and subtracting, we get
\beqs\begin{split} \intor g(\theta)\theta dx=&\intor (H(Du,x)+\beta_\ep(u))\theta dx\\&
=\intor (H(Du,x)-D_pH(Du,x)Du)\theta dx \\&+\intor(\beta_\ep(u)-\beta_\ep '(u)u)\theta dx +\intor u dx.
\end{split} 
\eeqs
Then, using \eqref{dphp} and  \eqref{beta2},  we can find a constant $C_0>0$ such that
\beq
\label{lem1comp}  \intor g(\theta)\theta dx \le C_0 \intor \theta dx +\intor u dx \leq C_0 \intor \theta dx+\intor u^+ dx.
\eeq
Since, $g$ satisfies \eqref{ggrowthprop},  
we deduce that 
\beq
\label{aaa}
\frac{1}{2}\intor g(\theta)\theta dx \le \intor u^+ dx+C_1.
\eeq 
Since $H\geq 0$, $\beta_\epsilon(u)\leq g(\theta)$. 
In particular, \eqref{aaa} implies 
\beqs \intor \beta_\ep(u)dx \le \intor g(\theta)dx \le \frac{1}{\theta_0}\intor g(\theta)\theta dx\le C\intor u^+ dx+C.\eeqs
At the same time, by \eqref{beta2}, 
\beqs  \intor \beta_\ep(u)dx\ge  \int_{\{u>2\ep\}} \beta'_\ep(u)udx-C=\frac{1}{\ep} \int_{\{u>2\ep\}} u dx-C.\eeqs
Hence 
\beqs \frac{1}{\ep}  \intor u^+dx \le C\intor u^+ dx+C,\eeqs from which, for $\ep$ small enough, we get 
\beq \label{uplus}\intor u^+ dx\le C\ep.\eeq

We infer,   in particular,  that $\intor g(\theta)\theta dx \le C$  from which \eqref{Hsbound} follows. On the other hand,  the convexity of $\theta g(\theta)$ implies 
 \beqs \intor \theta g(\theta)dx \geq \left( \intor \theta dx \right) g\left(\intor \theta dx\right)\geq -C\eeqs and 
 \eqref{Hsbound2} is then proven. 
 
 Estimate \eqref{uL1bound} can be proven observing that \eqref{lem1comp} combined with \eqref{Hsbound}, \eqref{Hsbound2}, and 
 estimate \eqref{uplus}
 yields 
 \[
\left| \intor u dx\right|\leq C. 
 \]
 This estimate, combined with \eqref{uplus} implies, 
 \[
 \intor u^- dx \leq C
 \]
 from which then  \eqref{uL1bound} follows. 
 

Finally, using the first equation of \eqref{amfgobs} and  \eqref{Hquadratic} we get
 \beqs \intor |Du|^2\theta dx \le  C\intor g(\theta)\theta dx+C\intor\theta dx\eeqs 
 and then \eqref{duthetaboundlem2} is a consequence of \eqref{Hsbound} and \eqref{Hsbound2}.
\finedim

\begin{lem}\label{boundlem2} 
Under the assumptions of Section \ref{opmfg}, 
there exists a constant $C>0$ independent of $\ep$ such that
for any solution   $(u,\theta)$  of \eqref{amfgobs}
\beq\label{H2estim}\|u\|_{W^{2,2}(\Tt^N)}\leq C,\eeq 
\beq\label{thetagbound} \intor g'(\theta)|D\theta|^2dx\leq C\eeq  
and 
\beq\label{thetagbound2}\|\theta^\frac{\al+1}{2}\|_{W^{1,2}(\Tt^N)}\leq C.\eeq
\end{lem}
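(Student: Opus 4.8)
The plan is to combine the two equations of \eqref{amfgobs} in the spirit of the classical Bernstein / energy method for mean-field games, exploiting the convexity assumption \eqref{Hconvexity} on $H$ and the monotonicity of $g$, and then to feed in the zeroth-order bounds from Lemma \ref{boundlem1}. First I would differentiate the first equation of \eqref{amfgobs}. Writing $H_{p_j}:=D_{p_j}H(Du,x)$ and differentiating $H(Du,x)+\beta_\ep(u-\psi)=g(\theta)$ (recall $\psi\equiv 0$) in the direction $x_k$ gives $H_{p_j}u_{x_jx_k}+H_{x_k}+\beta_\ep'(u)u_{x_k}=g'(\theta)\theta_{x_k}$. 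The natural thing is to multiply this by $\theta_{x_k}$ and sum, or, more efficiently, to pair the differentiated first equation against a suitable test function built from the second equation. Concretely, I would use $\theta$ as a multiplier after a further differentiation: multiply the $x_k$-differentiated first equation by $(\theta u_{x_k})_{x_k}$-type quantities, or — cleaner — test the second equation of \eqref{amfgobs} with $\Delta u$ and the differentiated first equation with $\theta$, then integrate by parts and subtract so that the "bad" third-order terms cancel and one is left with $\int \theta H_{p_ip_j}u_{x_ix_k}u_{x_jx_k}\,dx$ plus $\int g'(\theta)|D\theta|^2\,dx$ on the good side.

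The key algebraic identity to aim for is something of the shape
\begin{equation*}
\intor \theta\, H_{p_ip_j}u_{x_ix_k}u_{x_jx_k}\,dx + \intor g'(\theta)|D\theta|^2\,dx \le \text{(controlled terms)},
\end{equation*}
where the controlled terms come from the $x$-derivatives of $H$, from the $\beta_\ep$ term, and from the source $\gamma\equiv 1$. By \eqref{Hconvexity} the first integral dominates $\lambda\int \theta|D^2u|^2\,dx$, and since $\theta\ge\theta_0>0$ by \eqref{lowerboundtheta} this controls $\|D^2u\|_{L^2}$, giving \eqref{H2estim} once the first-derivative part is handled (the $W^{1,2}$ piece of $u$ follows from \eqref{uL1bound} together with the gradient bound, or from \eqref{Hquadratic} and \eqref{duthetaboundlem2}; note $\theta\ge\theta_0$ also turns $\int|Du|^2\theta\le C$ into $\int|Du|^2\le C$). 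The second integral is exactly \eqref{thetagbound}. For the right-hand side I would control $\int H_{x_k}\theta_{x_k}\,dx$ by splitting off $g'(\theta)$: write $H_{x_k}\theta_{x_k}=\big(H_{x_k}\sqrt{\theta g'(\theta)}\big)\cdot\big(\theta_{x_k}\sqrt{g'(\theta)/\theta}\big)$ — wait, more simply, Cauchy–Schwarz $|H_{x_k}\theta_{x_k}|\le \tfrac12 g'(\theta)|D\theta|^2 + \tfrac{1}{2g'(\theta)}|D_xH|^2$, absorb the first term, and bound $\int |D_xH|^2/g'(\theta)\,dx$ using \eqref{DpHsulinear} ($|D_xH|\le C(1+|Du|^2)$) and the lower bound $g'(\theta)\ge C\theta^{\al-1}$ from \eqref{g'prop}; the resulting integrand is like $(1+|Du|^2)^2\theta^{1-\al}$, which must be estimated by interpolation between $\|\theta\|$-bounds already available and the $\|D^2u\|_{L^2}$ term being generated — this is where the exponent restriction $\al<\al_0$ in \eqref{lipsalphaassmp} enters, via Sobolev embedding $W^{1,2}\hookrightarrow L^{2^*}$ applied to $\theta^{(\al+1)/2}$ and to $Du$.

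Finally, \eqref{thetagbound2} follows from \eqref{thetagbound}: since $g'(\theta)\ge C\theta^{\al-1}$, we have $|D(\theta^{(\al+1)/2})|^2 = \big(\tfrac{\al+1}{2}\big)^2\theta^{\al-1}|D\theta|^2 \le C\,g'(\theta)|D\theta|^2$, so $\int|D(\theta^{(\al+1)/2})|^2\,dx\le C$, and the $L^2$-norm of $\theta^{(\al+1)/2}$ itself is controlled because $\int\theta^{\al+1}\,dx$ is bounded — this uses \eqref{Hsbound} and \eqref{Hsbound2} when $\al\le 1$, or otherwise a Sobolev/interpolation argument combining the gradient bound just obtained with $\int\theta\,dx\le C$. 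The main obstacle, and the step requiring genuine care, is the estimate of the right-hand side terms involving $|D_xH|^2/g'(\theta)$ and $|D^2_{xx}H|$: one must track exactly how the powers of $\theta$ and of $|Du|$ combine, use \eqref{Dubounded}? — no, that is proved later, so at this stage only $L^2$-type control of $Du$ is available — and invoke the Gagliardo–Nirenberg–Sobolev inequality with the precise constant so that the term can be absorbed into $\lambda\int\theta|D^2u|^2$; the algebraic condition \eqref{lipsalphaassmp} on $\al_0$ is precisely what makes this absorption possible, and verifying it is the crux of the proof.
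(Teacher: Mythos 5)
Your overall skeleton is the right one and matches the paper's: apply two derivatives to the Hamilton--Jacobi equation, pair against $\theta$, test the adjoint equation with $\Delta u$ so that the third-order terms $\intor (D_pH\cdot D(\Delta u)+\beta_\ep'(u)\Delta u)\theta\,dx$ collapse to $\intor \Delta u\,dx=0$, and keep $\intor \theta H_{p_ip_j}u_{x_ix_k}u_{x_jx_k}\,dx$ and $\intor g'(\theta)|D\theta|^2\,dx$ on the good side (the $\beta_\ep''(u)|Du|^2\theta$ term also lands on the good side because $\beta_\ep''\ge 0$, a sign you should state explicitly). The derivation of \eqref{thetagbound2} from \eqref{thetagbound} and \eqref{g'prop} is also essentially the paper's argument.

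The genuine gap is in the step you yourself flag as the crux: the treatment of the $D_xH$ contribution. Splitting $H_{x_k}\theta_{x_k}$ by Cauchy--Schwarz against $g'(\theta)|D\theta|^2$ leaves you with $\intor |D_xH|^2/g'(\theta)\,dx\le C\intor (1+|Du|^4)\,\theta^{1-\al}\,dx$, and at this stage of the paper the only available bounds are $\intor\theta\,dx\le C$, $\intor|Du|^2\theta\,dx\le C$ and $\intor|Du|^2\,dx\le C$; a $|Du|^4$ integral is not controlled by these, and the proposed Gagliardo--Nirenberg absorption into $\lambda\intor\theta|D^2u|^2\,dx$ fails for $N\ge 3$ (one gets $\|Du\|_{L^4}^4\lesssim \|D^2u\|_{L^2}^{N}\|Du\|_{L^2}^{4-N}$, a power of $\|D^2u\|_{L^2}$ strictly larger than $2$), while for $\al<1$ the weight $\theta^{1-\al}$ is in addition unbounded. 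The condition \eqref{lipsalphaassmp} does not rescue this: in the paper it is used only later, in the Moser-type iteration for $\|\theta\|_\infty$ in the proof of Theorem \ref{lipestimthm}, not in Lemma \ref{boundlem2}. The correct move is to integrate by parts the other way, i.e. write the $x$-dependence terms as $\intor(\Delta_x H+2H_{x_ip_j}u_{x_ix_j})\theta\,dx$ and estimate them with the second-derivative growth conditions \eqref{growthH}: $|\Delta_xH|\theta\le C(1+|Du|^2)\theta$ and $|2H_{x_ip_j}u_{x_ix_j}|\theta\le \tfrac{\lambda}{2}|D^2u|^2\theta+C(1+|Du|^2)\theta$, after which everything is controlled by \eqref{Hsbound} and \eqref{duthetaboundlem2}, with no Sobolev embedding and no restriction on $\al$. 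Once this is done, \eqref{thetagbound} and, via $\theta\ge\theta_0$, the bound $\intor|D^2u|^2\,dx\le C$ in \eqref{H2estim} follow exactly as you intended.
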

\dim
Using \eqref{Hquadratic}, \eqref{lowerboundtheta} and  \eqref{Hsbound2} , we get 
\begin{align*}
& \intor |Du|^2dx\leq C \intor H(Du,x)dx+C\le C\intor g(\theta)dx+C\\&\quad \le C\intor \theta g(\theta)dx+C\le C.
\end{align*}
The previous bound on $\intor |Du|^2dx$, estimate \eqref{uL1bound},  and the Poincar\'{e} inequality imply
\beqs \|u\|_{L^{2}(\Tt^N)}\le C.\eeqs 

Next, differentiating twice with respect to $x_i$  the first equation in  \eqref{amfgobs}, and then summing on $i$ (we use Einstein's convention, that is,
summing over repeated indices) we get
\beqs\begin{split}D_p H\cdot D(\Delta u)&+\Delta_x  H+ 2 H_{x_ip_j}u_{x_jx_i}+H_{p_jp_l}u_{x_jx_i}u_{x_lx_i}\\&
+\beta_\ep'(u)\Delta u+\beta_\ep''(u)|Du|^2=\Delta(g(\theta)).
\end{split}\eeqs

Multiplying the previous equation by $\theta$ and using that 
\begin{align*} &\intor (D_p H\cdot D(\Delta u)+\beta_\ep'(u)\Delta u)\theta dx=\intor (-\di( D_p H\theta)+\beta_\ep'(u)\theta)\Delta u dx\\&\quad =\intor \Delta u dx=0,
\end{align*}
we obtain
\begin{align*}
&\intor(\Delta_x  H+ 2 H_{x_ip_j}u_{x_jx_i}+H_{p_jp_l}u_{x_jx_i}u_{x_lx_i})\theta dx\\&\quad=
-\intor \beta_\ep''(u)|Du|^2\theta dx+\intor \Delta(g(\theta))\theta dx.
\end{align*}
The uniformly convexity of $H$, properties \eqref{growthH}, the convexity of $\beta_\ep$ and \eqref{duthetaboundlem2} then imply
\beqs C\intor |D^2 u|^2\theta dx+ \intor g'(\theta)|D\theta|^2dx \leq C \intor |D u|^2\theta dx+C\leq C\eeqs which gives, in particular, \eqref{thetagbound}. Moreover, 
from the previous inequality  and \eqref{lowerboundtheta}, we infer that 
\beqs \intor |D^2 u|^2dx\le C. 
\eeqs 
   This concludes the proof of \eqref{H2estim}.
   Finally, from \eqref{thetagbound} and \eqref{g'prop} we infer that 
    \beqs \intor\theta^{\al-1}|D\theta|^2dx\leq C  \intor g'(\theta)|D\theta|^2dx\leq C,\eeqs
   that is $|D\theta^\frac{\al+1}{2}|\in L^2(\Tt^N)$. Since, in addition $\theta\in L^1(\Tt^N)$, the previous estimate and the Poincar\'{e} inequality imply that 
  $\theta^\frac{\al+1}{2}\in L^2(\Tt^N)$ and so \eqref{thetagbound2} holds.
  \finedim 
We end this section with the proof of Theorem \ref{lipestimthm}. 

\begin{proof}[Proof of Theorem \ref{lipestimthm}]


Estimate \eqref{w22est} follows from lemma \ref{boundlem2}. Therefore, we proceed to prove the remaining bounds.
 First, we remark that  from assumption \eqref{g'prop}, $g$ satisfies 
\beq\label{gbehavioral>0}g(\theta)\leq C\theta^{\al}+C,\quad\text{when }\al>0,\eeq
and 
\beq\label{gbehavioral=0} g(\theta)\leq C\log(\theta)+C,\quad\text{when }\al=0.\eeq
Next, we show that $\beta'_\ep(u)$ is bounded uniformly in $\ep$. The function $s\rightarrow \beta'_\ep(s)$ is increasing. Hence $\beta'_\ep(u)$ attains its maximum 
where $u$ has the maximum. Let $x_0$ be a maximum point of $u$, then $Du(x_0)=0$ and $D^2u(x_0)\leq 0$ and from \eqref{amfgobs}, at $x=x_0$ we have
\beqs
\begin{split}
1&=-\di (D_p H(Du,x)\theta)+\beta_{\ep}'(u)\theta\\&
=-H_{p_ip_j}u_{x_ix_j}\theta-H_{p_ix_i}\theta-\frac{1}{g'(\theta)}(u_{x_ix_j}H_{p_i}H_{p_j}+H_{x_i}H_{p_i}+\beta_\ep'(u)u_{x_i}H_{p_i})+\beta_\ep'(u)\theta\\&
\ge -H_{p_ix_i}\theta-\frac{H_{x_i}H_{p_i}}{g'(\theta)}+\beta_\ep'(u)\theta.
\end{split}
\eeqs
Then

\beqs \max \beta_\ep'(u)=\beta_\ep'(u(x_0))\leq H_{p_ix_i}(0,x_0)+\frac{H_{x_i}(0,x_0)H_{p_i}(0,x_0)}{g'(\theta(x_0))\theta(x_0)}+\frac{1}{\theta(x_0)}.\eeqs

Using the properties of the Hamiltonian, \eqref{g'prop} and \eqref{lowerboundtheta}, we conclude that 
\beq\label{beta'bounded} \max \beta_\ep'(u)\le C.\eeq 
Next, we claim that, for some constant $C$ independent on $p$,  
\beq\label{thetagradpropthmclaim1} \intor\theta^{p-1}|D\theta|^2dx\leq C\intor\theta^{p+1}(1+|Du|^4)dx.\eeq
In order to prove \eqref{thetagradpropthmclaim1}, we use the technique from  \cite{E1}  (see the proof of Theorem 5.1) and multiply equation the second equation in \eqref{amfgobs} by $\di (\theta^p D_pH(Du,x))$, for $p>0$, and integrate by parts:
\beq\label{lemlipine1}\begin{split} \intor \beta_\ep'\theta\di (\theta^p D_pH)dx&=\intor (\theta H_{p_i})_{x_i}  (\theta^p H_{p_j})_{x_j}dx \\&
=\intor (\theta H_{p_i})_{x_j}  (\theta^p H_{p_j})_{x_i}dx \\&
=\intor (\theta (H_{p_i})_{x_j}+\theta_{x_j}H_{p_i})(\theta^p (H_{p_j})_{x_i}+p\theta^{p-1}\theta_{x_i}H_{p_j})dx\\&
=\intor \theta^{p+1}(H_{p_i})_{x_j}(H_{p_j})_{x_i}+p\theta^{p-1}H_{p_i}\theta_{x_i}H_{p_j}\theta_{x_j}\\&+(p+1)\theta^p\theta_{x_i}H_{p_j}(H_{p_i})_{x_j}dx\\&
=:\intor I_1+I_2+I_3 dx.
\end{split}\eeq
Using  assumptions \eqref{growthH} on $H$, we get
\beqs\begin{split}
I_1&=\theta^{p+1}(H_{p_ip_k}u_{x_kx_j}+H_{p_ix_j})(H_{p_jp_l}u_{x_lx_i}+H_{p_jx_i})\\&
\ge \theta^{p+1}[\gamma^2|D^2u|^2-C(1+|Du|)|D^2u|-C(1+|Du|^2)]\\&
\ge  \theta^{p+1}\tilde{\gamma}^2|D^2u|^2 -C \theta^{p+1}(1+|Du|^2),
\end{split}
\eeqs for some $\tilde{\gamma}>0$. 
Clearly $$I_2=p\theta^{p-1}|D_pH\cdot D\theta|^2.$$ Let us estimate $I_3$ from below. 
From the first equation of \eqref{amfgobs}, we gather that
\beq\label{gradthetasunstitution}H_{p_j}u_{x_jx_l}=g'(\theta)\theta_{x_l}-H_{x_l}-\beta_\ep'u_{x_l}.\eeq Assumption \eqref{g'prop} and the lower bound on $\theta$
\eqref{lowerboundtheta}, imply the existence of a positive constant $C_0$ such that
\beq\label{g'thetathetalower}g'(\theta)\theta\ge C_0>0.\eeq 
 Then, using the properties of the Hamiltonian, \eqref{beta'bounded} , \eqref{gradthetasunstitution} and \eqref{g'thetathetalower}, we get
\beqs\begin{split}I_3&=(p+1)\theta^p\theta_{x_i}H_{p_j}(H_{p_ip_l}u_{x_lx_j}+H_{p_ix_j})\\&
=(p+1)g'(\theta)\theta^{p}H_{p_ip_l}\theta_{x_i}\theta_{x_l}+(p+1)\theta^p\theta_{x_i}(H_{p_j}H_{p_ix_j}-H_{p_ip_l}H_{x_l})-(p+1)\theta^p\beta_\ep'H_{p_ip_l}\theta_{x_i}u_{x_l}\\&
\ge (p+1)\gamma C_0\theta^{p-1}|D\theta|^2-C(p+1)\theta^p|D\theta|(1+|Du|^2)-C(p+1)\theta^p|D\theta||Du|\\&
\ge C(p+1)\theta^{p-1}|D\theta|^2-C(p+1)\theta^{p+1}(1+|Du|^4).
\end{split}
\eeqs

Next, let us bound from above the left-hand side of \eqref{lemlipine1}. We have
\beqs\begin{split} \beta_\ep'\theta\di (\theta^p D_p H)&= \beta_\ep'\theta(p\theta^{p-1}D_pH \cdot D\theta+\theta^pH_{p_ip_j}u_{x_jx_i}+\theta^pH_{p_ix_i})\\&
\le p \theta^{p-1}|D_pH \cdot D\theta|^2+\tilde{\gamma}^2\theta^{p+1}|D^2u|^2+Cp\theta^{p+1}(1+|Du|),
\end{split}
\eeqs
where, again, we used the properties of the Hamiltonian and \eqref{beta'bounded}. 


From the preceding estimates, we conclude that 
\beqs\begin{split}
C (p+1)\intor\theta^{p-1}|D\theta|^2dx&-C(p+1)\intor\theta^{p+1}(1+|Du|^4)dx+p\intor\theta^{p-1}|D_pH\cdot D\theta|^2 dx\\&
+\tilde{\gamma}^2\intor \theta^{p+1}|D^2u|^2 -C\intor \theta^{p+1}(1+|Du|^2)dx\\&
\le \intor I_1+I_2+I_3dx\\&
=\intor \beta_\ep'\theta\di (\theta^p D_pH)dx\\&
\leq  p\intor \theta^{p-1}|D_pH\cdot D\theta|^2dx+\tilde{\gamma}^2\intor\theta^{p+1}|D^2u|^2dx\\&+Cp\intor\theta^{p+1}(1+|Du|)dx.
\end{split}
\eeqs 
The previous inequalities imply \eqref{thetagradpropthmclaim1}.

By Lemma \ref{boundlem2}, 
if $N>2$, we have $\theta\in L^{\frac{2^* (1+\alpha)}{2}}$, and for
$N=2$, $\theta\in L^p$, for all $p$. If $N=1$, the \eqref{thetabounded} holds trivially by
Morrey's theorem. 
	
Assume $N>2$, then Sobolev's inequality provides the bound
\beq\label{sobolinthmlip}\begin{split} \left(\intor \theta^{\frac{p+1}{2}2^*}dx\right)^\frac{2}{2^*}&
\leq C\intor \theta^{p+1}dx+C\intor|D(\theta^{\frac{p+1}{2}})|^2dx\\&
= C\intor \theta^{p+1}dx+C(p+1)^2\intor\theta^{p-1}|D\theta|^2dx.
\end{split}
\eeq
Let $\beta:=\sqrt{\frac{2^*}{2}}=\sqrt{\frac{N}{N-2}}>1$, then assumption \eqref{lipsalphaassmp} can be rewritten in the following way 
$$2\al\leq (\alpha+1)\beta^2\frac{\beta-1}{\beta}$$
and,  for $\al>0$, it implies, together with \eqref{Hquadratic} and  \eqref{gbehavioral>0}
that  
\beqs|Du|^4\leq C(g(\theta))^2+C\leq C\theta^{2\al}+C\leq C(1+\theta^{(\alpha+1)\beta^2\frac{\beta-1}{\beta}}).\eeqs 
The same inequality holds when  $\al=0$, using  \eqref{gbehavioral=0}:
 \beqs|Du|^4\leq C(g(\theta))^2+C\leq C(\log(\theta))^2+C\leq C(1+\theta^{(\alpha+1)\beta^2\frac{\beta-1}{\beta}}).\eeqs
 Therefore, from H{\"o}lder inequality we get
 \beqs\begin{split}\intor \theta^{p+1}(1+|Du|^4)dx&\leq C\intor \theta^{p+1}(1+\theta^{(\alpha+1)\beta^2\frac{\beta-1}{\beta}})dx\\&
 \leq C \intor \theta^{p+1}dx+C\left(\intor \theta^{(p+1)\beta}dx\right)^\frac{1}{\beta}\left(\intor \theta^{(\alpha+1)\beta^2} dx\right)^\frac{\beta-1}{\beta}\\&
 \leq C \intor \theta^{p+1}dx+C\left(\intor \theta^{(p+1)\beta}dx\right)^\frac{1}{\beta}\\&
 \leq C\left(\intor \theta^{(p+1)\beta}dx\right)^\frac{1}{\beta}.
\end{split} \eeqs
The last inequality, \eqref{thetagradpropthmclaim1} and \eqref{sobolinthmlip} give the estimate
\beqs\left(\intor\theta^{(p+1)\beta^2}dx\right)^\frac{1}{\beta^2}\leq Cp^2 \left(\intor\theta^{(p+1)\beta}dx\right)^\frac{1}{\beta}.
\eeqs
Arguing as in \cite{E1}, we get \eqref{thetabounded} and hence  \eqref{Dubounded} for $N>2$. 

When $N\le 2$, the reasoning is similar, because $\theta\in L^p$ for any $p$, 
and so \eqref{thetabounded} holds too. 

Finally, \eqref{thetagradest} is a consequence of 
\eqref{thetagbound2} and the estimate \eqref{thetabounded} just proven.


\end{proof}


\section{Convergence}
\label{convsec}

In this section, we present the proof of Theorem \ref{cthm} using the previous estimates. 

\begin{proof}[Proof of Theorem \ref{cthm}]
Let $(u_\ep,\theta_\ep)$ be a solution of \eqref{amfgobs}. The estimates obtained in the previous section, 
namely in Theorem \ref{lipestimthm}, imply the existence of  functions $u\in W^{2,2}(\Tt^N)\cap W^{1,\infty}(\Tt^N)$ and $\theta\in W^{1,2}(\Tt^N)\cap L^{\infty}(\Tt^N)$ such that, up to subsequence, 
as $\ep\rightarrow0$
\beqs u_\ep\rightarrow u\quad\text{in }L^\infty(\Tt^N),\eeqs
\beqs Du_\ep\rightarrow Du,\quad  \theta_\ep\rightarrow\theta \quad\text{in }L^2(\Tt^N),\eeqs 
\beqs D^2u_\ep\rightharpoonup D^2u \quad\text{in }L^2(\Tt^N).\eeqs
Furthermore, 
the sequence $u_\ep$ converges uniformly to a non-positive function $u$.

For $s>0$, since $\beta_\ep'$ is increasing ($\beta_\ep''>0$) and $\beta_\ep(0)=0$ we have 
$$\beta_\ep(s)=\beta_\ep(0)+\beta_\ep'(\xi_s)s\leq \max_{t\in[0,s]}\beta_\ep' (t)s=\beta_\ep'(s)s.$$ Therefore, for any $s\in\R$
$$\beta_\ep(s)\leq\beta_\ep'(s)s^+.$$ 

This implies, using \eqref{beta'bounded}
$$
0\leq \beta_\ep(u_\ep)\leq \beta_\ep'(u_\ep)(u_\ep)^+\leq C(u_\ep)^+.
$$
We conclude that $\beta_\ep(u_\ep)\rightarrow 0$ uniformly as $\ep\rightarrow 0$.
Hence, the limit $(u, \theta)$ solves
\[
H(Du,x) = g(\theta)\quad\text{in }\quad \Tt^N,
\]
\[
-\di (D_p H(Du,x)\theta)\leq 1\quad\text{in } \quad \Tt^N,
\]
\[
-\di (D_p H(Du,x)\theta)= 1\quad\text{in } \quad \{u<0\}.
\]
$$u\leq 0.$$
\end{proof}

\section{Uniqueness}
\label{uniqsec}

We end the paper with the proof of uniqueness of solutions to \eqref{obstacleepmfglimiteqrem}. This will be based upon a modified 
monotonicity argument inspired by the original technique by Lasry and Lions, see \cite{ll1, ll2, ll3}.  

\begin{proof}[Proof of Theorem \ref{uthm}]
Let $(u_1, \theta_1)$ and $(u_2, \theta_2)$ be distinct solutions of \eqref{obstacleepmfglimiteqrem}. Set 
$$A:=\{u_1-u_2>0\}.$$
$A$ is an open set. Moreover, $A\subset \{u_2<0\}$ since $u_1-u_2=u_1\leq 0$ in $\{u_2=0\}$, therefore, 
 \beqs-\text{div}(D_pH(Du_2,x)\theta_2)=\gamma(x)\quad\text{in }A.\eeqs
From the first equality in \eqref{obstacleepmfglimiteqrem}, we have 
\begin{equation*}\int_A [H(Du_1,x)-H(Du_2,x)](\theta_1-\theta_2)dx=\int_A(g(\theta_1)-g(\theta_2))(\theta_1-\theta_2)dx.\end{equation*}
Using  the second inequality for $u_1$ and the third equality for $u_2$ in \eqref{obstacleepmfglimiteqrem}, multiplying by  $ u_1-u_2>0$ in $A$ and integrating by parts, we obtain
\begin{equation*}\begin{split}0&\leq  \int_A \text{div}(D_p H(D u_1, x)\theta_1-D_pH(Du_2,x)\theta_2)(u_1-u_2)dx
\\&=-\int _A(D_p H(D u_1, x)\theta_1-D_pH(Du_2,x)\theta_2)D(u_1-u_2)dx.\end{split}\end{equation*}
Note that there is no boundary data since $u_1-u_2=0$ on $\partial A$. 
Adding the two inequalities and using the convexity of $H$, we get
\begin{equation*}\begin{split} 
0\leq \int_A(g(\theta_1)-g(\theta_2))(\theta_1-\theta_2)&\leq \int_A [H(Du_1,x)-H(Du_2,x)](\theta_1-\theta_2)dx\\&
-\int _A(D_p H(D u_1, x)\theta_1-D_pH(Du_2,x)\theta_2)D(u_1-u_2)dx\\&
=-\int _A[H(Du_2,x)-H(Du_1,x)-D_p H(D u_1, x)D(u_2-u_1)]\theta_1dx\\&
-\int _A[H(Du_1,x)-H(Du_2,x)-D_p H(D u_2, x)D(u_1-u_2)]\theta_2dx\\&
\leq -C\int _A |D(u_1-u_2)|^2dx.
\end{split}\end{equation*}
Thus we infer that $|A|=0$, i.e., $u_1\leq u_2$ almost everywhere. 
\end{proof}

\bibliographystyle{alpha}

\bibliography{mfg}

\begin{thebibliography}{GISMY10}

\bibitem[BCD97]{Bardi}
M.~Bardi and I.~Capuzzo-Dolcetta.
\newblock {\em Optimal control and viscosity solutions of
  {H}amilton-{J}acobi-{B}ellman equations}.
\newblock Birkh\"auser Boston Inc., Boston, MA, 1997.
\newblock With appendices by Maurizio Falcone and Pierpaolo Soravia.

\bibitem[BP87]{MR921827}
G.~Barles and B.~Perthame.
\newblock Discontinuous solutions of deterministic optimal stopping time
  problems.
\newblock {\em RAIRO Mod\'el. Math. Anal. Num\'er.}, 21(4):557--579, 1987.

\bibitem[BP88]{MR957658}
G.~Barles and B.~Perthame.
\newblock Exit time problems in optimal control and vanishing viscosity method.
\newblock {\em SIAM J. Control Optim.}, 26(5):1133--1148, 1988.

\bibitem[Car11]{cardaliaguet}
P.~Cardaliaguet.
\newblock Notes on mean-field games.
\newblock 2011.

\bibitem[Car13a]{Cd1}
P.~Cardaliaguet.
\newblock Long time average of first order mean-field games and weak kam
  theory.
\newblock {\em Preprint}, 2013.

\bibitem[Car13b]{Cd2}
P.~Cardaliaguet.
\newblock Weak solutions for first order mean-field games with local coupling.
\newblock {\em Preprint}, 2013.

\bibitem[CGT]{CGT2}
F.~Cagnetti, D.~Gomes, and H.~V. Tran.
\newblock Adjoint methods for obstacle problems and weakly coupled systems of
  {P}{D}{E}.
\newblock {\em submitted}.

\bibitem[CLLP12]{CLLP}
P.~Cardaliaguet, J.-M. Lasry, P.-L. Lions, and A.~Porretta.
\newblock Long time average of mean field games.
\newblock {\em Netw. Heterog. Media}, 7(2):279--301, 2012.

\bibitem[Eva03]{E1}
L.~C. Evans.
\newblock Some new {PDE} methods for weak {KAM} theory.
\newblock {\em Calc. Var. Partial Differential Equations}, 17(2):159--177,
  2003.

\bibitem[Eva09]{E2}
L.~C. Evans.
\newblock Further {PDE} methods for weak {KAM} theory.
\newblock {\em Calc. Var. Partial Differential Equations}, 35(4):435--462,
  2009.

\bibitem[GISMY10]{GIMY}
D.~Gomes, R.~Iturriaga, H.~S{\'a}nchez-Morgado, and Y.~Yu.
\newblock Mather measures selected by an approximation scheme.
\newblock {\em Proc. Amer. Math. Soc.}, 138(10):3591--3601, 2010.

\bibitem[GM]{GMit}
D.~Gomes and H.~Mitake.
\newblock Stationary mean-field games with congestion and quadratic
  hamiltonians.
\newblock {\em Preprint}.

\bibitem[GPa]{GPim1}
D.~Gomes and E.~Pimentel.
\newblock Local regularity for mean-field games in the whole space.
\newblock {\em Minimax Theory and Applications}.

\bibitem[GPb]{GPim2}
D.~Gomes and E.~Pimentel.
\newblock Time dependent mean-field games with logarithmic nonlinearities.
\newblock {\em Preprint}.

\bibitem[GPSM12]{GPM1}
D.~A. Gomes, G.~E. Pires, and H.~S{\'a}nchez-Morgado.
\newblock A-priori estimates for stationary mean-field games.
\newblock {\em Netw. Heterog. Media}, 7(2):303--314, 2012.

\bibitem[GPSM13]{GPM3}
D.~Gomes, E.~Pimentel, and H~Sanchez-Morgado.
\newblock Time dependent mean-field games in the superquadratic case.
\newblock {\em Preprint}, 2013.

\bibitem[GPSM14]{GPM2}
D.~Gomes, E.~Pimentel, and H~Sanchez-Morgado.
\newblock Time dependent mean-field games in the subquadratic case.
\newblock {\em To appear in Comm. Partial Differential Equations}, 2014.

\bibitem[GPV14]{GPatVrt}
D.~Gomes, S.~Patrizi, and V.~Voskanyan.
\newblock On the existence of classical solutions for stationary extended mean
  field games.
\newblock {\em Nonlinear Anal.}, 99:49--79, 2014.

\bibitem[GS14]{GS}
Diogo~A. Gomes and Jo{\~a}o Sa{\'u}de.
\newblock Mean field games models---a brief survey.
\newblock {\em Dyn. Games Appl.}, 4(2):110--154, 2014.

\bibitem[GSM14]{GM}
Diogo Gomes and H{\'e}ctor S{\'a}nchez~Morgado.
\newblock A stochastic {E}vans-{A}ronsson problem.
\newblock {\em Trans. Amer. Math. Soc.}, 366(2):903--929, 2014.

\bibitem[HCM07]{C2}
M.~Huang, P.~E. Caines, and R.~P. Malham{\'e}.
\newblock Large-population cost-coupled {LQG} problems with nonuniform agents:
  individual-mass behavior and decentralized {$\epsilon$}-{N}ash equilibria.
\newblock {\em IEEE Trans. Automat. Control}, 52(9):1560--1571, 2007.

\bibitem[HMC06]{C1}
M.~Huang, R.~P. Malham{\'e}, and P.~E. Caines.
\newblock Large population stochastic dynamic games: closed-loop
  {M}c{K}ean-{V}lasov systems and the {N}ash certainty equivalence principle.
\newblock {\em Commun. Inf. Syst.}, 6(3):221--251, 2006.

\bibitem[Lio82]{L}
P.~L. Lions.
\newblock {\em Generalized solutions of {H}amilton-{J}acobi equations},
  volume~69 of {\em Research Notes in Mathematics}.
\newblock Pitman (Advanced Publishing Program), Boston, Mass., 1982.

\bibitem[Lio11]{LCDF}
P.-L. Lions.
\newblock College de france course on mean-field games.
\newblock 2007-2011.

\bibitem[LL06a]{ll1}
J.-M. Lasry and P.-L. Lions.
\newblock Jeux \`a champ moyen. {I}. {L}e cas stationnaire.
\newblock {\em C. R. Math. Acad. Sci. Paris}, 343(9):619--625, 2006.

\bibitem[LL06b]{ll2}
J.-M. Lasry and P.-L. Lions.
\newblock Jeux \`a champ moyen. {II}. {H}orizon fini et contr\^ole optimal.
\newblock {\em C. R. Math. Acad. Sci. Paris}, 343(10):679--684, 2006.

\bibitem[LL07a]{ll3}
J.-M. Lasry and P.-L. Lions.
\newblock Mean field games.
\newblock {\em Jpn. J. Math.}, 2(1):229--260, 2007.

\bibitem[LL07b]{ll4}
J.-M. Lasry and P.-L. Lions.
\newblock Mean field games.
\newblock {\em Cahiers de la Chaire Finance et D\'eveloppement Durable}, 2007.

\bibitem[LLG10]{llg2}
J.-M. Lasry, P.-L. Lions, and O.~Gu{\'e}ant.
\newblock Mean field games and applications.
\newblock {\em Paris-Princeton lectures on Mathematical Finance}, 2010.

\bibitem[Por13]{porretta}
A.~Porretta.
\newblock On the planning problem for the mean-field games system.
\newblock {\em Dyn. Games Appl.}, 2013.

\end{thebibliography}

\end{document}